\newtheorem{theorem}{Theorem}
\newtheorem{lemma}[theorem]{Lemma}
\newtheorem{conjecture}{Conjecture}
\newtheorem{problem}{Problem}
\author
{
Raphael Steiner 
}
\thanks{Institute of Theoretical Computer Science, ETH Z\"{u}rich, Switzerland,  \texttt{raphaelmario.steiner@inf.ethz.ch}.
This work was supported by an ETH Postdoctoral Fellowship.}
\date{\today}
\title{Improved lower bound for the list chromatic number of graphs with no $K_t$ minor}
\begin{document}
\maketitle

\begin{abstract}
Hadwiger's conjecture asserts that every graph without a $K_t$-minor is $(t-1)$-colorable. It is known that the exact version of Hadwiger's conjecture does not extend to list coloring, but it has been conjectured by Kawarabayashi and Mohar (2007) that there exists a constant $c$ such that every graph with no $K_t$-minor has list chromatic number at most $ct$. More specifically, they also conjectured that this holds for $c=\frac{3}{2}$. 

Refuting the latter conjecture, we show that the maximum list chromatic number of graphs with no $K_t$-minor is at least $(2-o(1))t$, and hence $c \ge 2$ in the above conjecture is necessary. This improves the previous best lower bound by  Bar\'{a}t, Joret and Wood (2011), who proved that $c \ge \frac{4}{3}$. Our lower-bound examples are obtained via the probabilistic method.
\end{abstract}

\section{Introduction}
\paragraph{\textbf{Preliminaries.}} Given a number $t \in \mathbb{N}$, a \emph{$K_t$-minor} is a graph $G$ whose vertex-set can be partitioned into $t$ pairwise disjoint non-empty sets $Z_1,\ldots,Z_t$, such that for every $i \in [t]$, the induced subgraph $G[Z_i]$ is connected, and furthermore, for every two distinct $i, j \in [t]$, there exists at least one edge in $G$ with endpoints in the sets $Z_i$ and $Z_j$. We say that a graph \emph{contains $K_t$ as a minor} or that it \emph{contains a $K_t$-minor} if it admits a subgraph which is a $K_t$-minor. 

Given a graph $G$ and a color-set $S$, a \emph{proper coloring} of $G$ with colors from $S$ is a mapping $c:V(G) \rightarrow S$ such that $c^{-1}(s)$ is an independent set, for every $s \in S$. Given a graph $G$, a \emph{list assignment} for $G$ is an assignment $L:V(G) \rightarrow 2^\mathbb{N}$ of finite sets $L(v)$ (called lists) to the vertices $v \in V(G)$. An \emph{$L$-coloring} of $G$ is defined as a proper coloring $c:V(G) \rightarrow \mathbb{N}$ of $G$ in which every vertex is assigned a color from its respective list, i.e., $c(v) \in L(v)$ for every $v \in V(G)$. 
With this, we may define the chromatic number $\chi(G)$ of a graph $G$ as the smallest integer $k \ge 1$ such that $G$ admits an $L$-coloring, where $L(v):=[k]$ for every $v \in V(G)$. 

In a similar way, the \emph{list chromatic number} $\chi_\ell(G)$ of a graph $G$ is defined as the smallest number $k \ge 1$ such that $G$ admits an $L$-coloring for \emph{every} assignment $L(\cdot)$ of color lists to the vertices of $G$, provided that $|L(v)| \ge k$ for every $v \in V(G)$. 

Clearly, $\chi(G) \le \chi_\ell(G)$ for every graph $G$, but in general $\chi_\ell(G)$ is not bounded from above by a function in $\chi(G)$, as shown e.g. by complete bipartite graphs. 

\bigskip

Hadwiger's conjecture, arguably one of the most important open problems in graph theory, states the following upper bound on the chromatic number of graphs with no $K_t$-minor:
\begin{conjecture}[Hadwiger~\cite{hadwiger}, 1943]\label{hadwiger}
For every $t \in \mathbb{N}$, if $G$ is a graph not containing a $K_t$-minor, then $\chi(G) \le t-1$.
\end{conjecture}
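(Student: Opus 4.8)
The plan is to follow the classical minimal-counterexample framework, which is essentially the only route that has produced unconditional progress on this conjecture. Assume it fails and let $G$ be a counterexample with $|V(G)|$ minimum: then $G$ has no $K_t$-minor, $\chi(G)\ge t$, and every proper minor of $G$ — in particular every $G-v$ and every contraction $G/e$ — is $(t-1)$-colorable. Routine reductions then show that $G$ may be assumed to be $(t-1)$-vertex-critical (so of minimum degree at least $t-1$) and to have no clique cutset (else one glues colorings of the parts), and in fact $t$-connected. The task is to combine the absence of a $K_t$-minor with these density and connectivity constraints to produce either a $(t-1)$-coloring of $G$ or a $K_t$-minor, contradicting the choice of $G$.

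For small $t$ this program can be executed directly. The cases $t\le 3$ are immediate, since a $K_3$-minor-free graph is a forest. For $t=4$ one uses that $K_4$-minor-free (equivalently, series--parallel) graphs always contain a vertex of degree at most $2$, contradicting $(t-1)$-criticality, so $\chi(G)\le 3$. For $t=5$ and $t=6$ the argument rests on deep structure theorems: Wagner's decomposition realizes every $K_5$-minor-free graph as a clique-sum of order at most $3$ of planar graphs and the Wagner graph $V_8$, reducing the case $t=5$ to the Four Color Theorem; Robertson, Seymour and Thomas show that a minimal counterexample for $t=6$ must have an apex vertex whose deletion leaves a planar graph, again reducing to the Four Color Theorem. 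So for these cases the plan is: prove the relevant decomposition, color each indecomposable piece (invoking the Four Color Theorem wherever a planar piece arises), and glue the partial colorings along the small separators using the criticality of $G$.

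The main obstacle — and the reason Conjecture~\ref{hadwiger} remains open for every $t\ge 7$ — is that no structure theorem fine enough to drive such an argument is known for $K_t$-minor-free graphs when $t$ is large, and a direct density bound falls well short. The extremal function for clique minors, due to Kostochka and to Thomason, shows that $K_t$-minor-free graphs have average degree $O(t\sqrt{\log t})$ and hence are $O(t\sqrt{\log t})$-degenerate, yielding only $\chi(G)=O(t\sqrt{\log t})$; substantial later work (Norin--Song, Postle, Norin--Postle--Song, Delcourt--Postle) has lowered this bound, but the best known general estimate remains super-linear in $t$, so the exact value $t-1$ is far out of reach. Consequently I would not expect a short, self-contained proof: any complete argument would have to either establish a genuinely new structural decomposition of $K_t$-minor-free graphs or push the extremal-function machinery well beyond its current scope. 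For the purposes of the present paper this is moot — Conjecture~\ref{hadwiger} is invoked only as motivation for the list-coloring problems studied below, and is not used as a hypothesis anywhere in what follows.
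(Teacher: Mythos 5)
This is a conjecture, not a theorem: the paper states Hadwiger's conjecture only as background and offers no proof of it, and indeed no proof is known for any $t\ge 7$. You correctly recognize this, and your write-up is an accurate survey of the minimal-counterexample strategy, the resolved small cases (with the reliance on the Four Color Theorem for $t=5,6$), and the extremal-function obstructions for large $t$. Since the paper has no proof to compare against, there is nothing to reconcile; your closing observation — that the conjecture is cited here purely as motivation and is not used as a hypothesis in the paper's actual results — is exactly right.
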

Hadwiger's conjecture and its variants have received a lot of attention in the past, and a very good overview of partial results on this topic until about $2$ years ago can be found in the survey article~\cite{survey} by Seymour. In the following, let us briefly highlight the milestone results regarding Hadwiger's conjecture obtained so far. 

By a result of Wagner~\cite{wagner}, it was known that the case $t=5$ of Hadwiger's conjecture is equivalent to the statement of the famous \emph{four color conjecture}. After its confirmative resolution by Appel, Haken and Koch~\cite{appelhaken1,appelhaken2} in 1977, Hadwiger's conjecture had been proved for all values $t \le 5$. Notably, in 1993, Robertson, Seymour and Thomas~\cite{robertson} managed to go one step further and to prove Hadwiger's conjecture also for the case $t=6$. As of today, all the cases $t \ge 7$ of Hadwiger's conjecture remain open problems. 

The evident difficulty of the exact version of Hadwiger's conjecture has inspired many researchers to study its asymptotic relaxation, known as the \emph{Linear Hadwiger's conjecture}:
\begin{conjecture}
There exists an absolute constant $c>0$ such that every graph $G$ not containing a $K_t$-minor satisfies $\chi(G) \le ct$. 
\end{conjecture}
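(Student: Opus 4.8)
Since the Linear Hadwiger's conjecture is a famous open problem, what follows is the line of attack I would pursue rather than a complete argument. The natural first reduction is from colouring to degeneracy: by the classical density theorem of Kostochka and Thomason, every $n$-vertex graph with no $K_t$-minor has $O(t\sqrt{\log t}\,) \cdot n$ edges, hence is $O(t\sqrt{\log t}\,)$-degenerate and therefore $O(t\sqrt{\log t}\,)$-colourable. This already gives $\chi(G) = O(t\sqrt{\log t}\,)$, so the conjecture holds with the absolute constant $c$ replaced by the slowly growing function $O(\sqrt{\log t}\,)$; the whole difficulty lies in deleting this $\sqrt{\log t}$ factor.

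To improve on the density bound, I would pass to a vertex-minimal counterexample $G$ with $\chi(G) > Ct$ and no $K_t$-minor and exploit criticality: such a $G$ has minimum degree at least $Ct$, is highly connected, and the neighbourhood of every vertex induces a $K_{t-1}$-minor-free graph. The plan is then to run a dichotomy: either $G$ contains a \emph{small} piece --- few vertices, but average degree close to the Kostochka--Thomason extremal value --- that is dense enough to route a $K_t$-minor directly, using the high connectivity of the remainder of $G$ to join the branch sets; or else $G$ is ``locally sparse'' and sufficiently expander-like everywhere, in which case a random partition of $V(G)$ into $O(t)$ parts followed by a greedy or absorption-type colouring of each part finishes the job. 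Iterating this dichotomy while shrinking the ``dense part'' at each stage is, in essence, the strategy behind the Norin--Song--Postle programme and the later refinements of Postle and of Delcourt--Postle, which push the bound down to $t\,(\log\log t)^{O(1)}$ and ultimately to $O(t\log\log t)$.

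The step I expect to be the main obstacle --- and the reason the conjecture remains open --- is precisely the point at which this dichotomy fails to close: the implication ``dense piece $\Rightarrow$ $K_t$-minor'' is efficient only when the dense piece has polylogarithmic order, whereas the colouring half of the argument applies only once \emph{everything} is polylog-sparse, and bridging the gap between a polylog-size dense obstruction and a constant-size one seems to demand a genuinely new idea. This is not an artefact of the method: the extremal examples for the density theorem are random graphs on $\Theta(t\sqrt{\log t}\,)$ vertices, which are themselves only barely $K_t$-minor-free, so any proof of the full conjecture must explain why gluing such near-extremal dense graphs into a larger $K_t$-minor-free host cannot force the chromatic number above an absolute-constant multiple of $t$. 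The present paper shows that in the list-colouring setting this last phenomenon is genuinely more severe: there no absolute constant below $2$ can work, so even if the list-colouring analogue of the Linear Hadwiger's conjecture holds, it holds only with $c \ge 2$.
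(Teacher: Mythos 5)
This statement is the Linear Hadwiger's Conjecture, which the paper presents as an \emph{open conjecture} rather than a theorem; the paper offers no proof of it, so there is nothing for your proposal to be compared against. You correctly recognise this and, instead of attempting a proof, give a survey of the state of the art. Your survey is accurate: the Kostochka--Thomason density bound does yield $O(t\sqrt{\log t})$-degeneracy and hence $O(t\sqrt{\log t})$-colourability; the reduction to a vertex-minimal counterexample, the observation that vertex neighbourhoods must be $K_{t-1}$-minor-free, and the dense-piece/sparse-piece dichotomy you describe are indeed the broad outline of the Norin--Postle--Song approach and its refinements by Postle and by Delcourt--Postle, culminating in the $O(t\log\log t)$ bound cited in the paper. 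Your diagnosis of where the argument stalls (bridging polylogarithmic-size dense obstructions to constant-size ones) is a fair characterisation of why the constant-factor version remains open. Just be clear in your own write-up that what you have produced is a literature summary and proof strategy, not a proof: as stated, the conjecture has not been established by anyone, and your text should not leave a reader with the impression that the dichotomy can currently be closed.
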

While also the Linear Hadwiger's conjecture remains open, there has been a lot of progress. By classical results of Kostochka~\cite{kostochka} and Thomason~\cite{thomason} from 1984, it was known that $K_t$-minor free graphs are $O(t\sqrt{\log t})$-colorable. While already quite close to a linear bound, it has proven difficult to overcome this order of magnitude during more than $30$ years of research. Finally, in 2019, Norine, Postle and Song~\cite{norine} managed to break this barrier, by proving that the maximum chromatic number of $K_t$-minor free graphs is in $O(t(\log t)^{1/4+o(1)})$. Very soon afterwards, several related results, extensions and improvements of this bound have been obtained, see~\cite{norine2, postle, postle2, postle3}. The current state of the art-bound of $O(t \log \log t)$ has been obtained only couple of months ago by Delcourt and Postle~\cite{del}. 

Parallel to the development of Hadwiger's conjecture, which concerns the ordinary chromatic number, the list chromatic number of graphs with no $K_t$-minor has also received a considerable amount of interest. 
For example, Borowiecki~\cite{borow} asked whether every graph with no $K_t$-minor has list-chromatic number at most $t-1$ (which would strengthen Hadwiger's conjecture). While this is easily seen to be true for $t \le 4$, already for $t=5$ there exist examples of planar graphs (hence $K_5$-minor free) with list chromatic number $5$, as constructed first by Voigt~\cite{voigt}. Later, Thomassen~\cite{thomassen} proved that $5$ is also the correct upper bound for the list chromatic number of planar graphs, and using Wagner's result~\cite{wagner}, this carries over to $K_5$-minor free graphs. For every $t \ge 6$, the maximum list chromatic number of $K_t$-minor free graphs remains unknown.

Since the exact version of Hadwiger's conjecture does not extend to list coloring, it is natural to study asymptotic versions. 
The current state of the art-upper bound on the list chromatic number of $K_t$-minor free graphs is $O(t(\log \log t)^2)$, as was recently proved by Delcourt and Postle~\cite{del}. Compare also~\cite{norine2,postle3} for the previous asymptotic upper bounds of magnitudes $O(t(\log t)^{1/4+o(1)})$ and $O(t(\log \log t)^6)$, respectively.

The following \emph{List Hadwiger conjecture} was first stated by Kawarabayashi and Mohar~\cite{kawarabayashi} in 2007, compare also the entry~\cite{op} in the Open Problem Garden.  

\begin{conjecture}\label{listhadwiger}
There exists an absolute constant $c>0$ such that every $K_t$-minor free graph $G$ satisfies $\chi_\ell(G) \le ct$. 
\end{conjecture}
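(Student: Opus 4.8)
\medskip

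\noindent\emph{Proof proposal.}
The plan is to attack Conjecture~\ref{listhadwiger} through the now-standard route of analyzing a vertex-minimal counterexample together with a density-increment argument, since a direct degeneracy bound is insufficient on its own. A vertex-minimal graph $G$ with $\chi_\ell(G) > ct$ has minimum degree at least $ct$ --- given a bad list assignment $L$ with all lists of size at least $ct$, a minimal $L$-coloring of $G - v$ for a would-be low-degree vertex $v$ always extends to $v$ --- so $G$ is, in effect, $ct$-list-critical. However, by the theorem of Kostochka and Thomason a $K_t$-minor-free graph has average degree only $O(t\sqrt{\log t})$, so this observation alone yields merely $\chi_\ell(G) = O(t\sqrt{\log t})$. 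To reach a linear bound one must exploit the finer structure of such a critical graph.

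The key steps, in order, would be the following. \emph{(i) Local sparsity.} Show that in a $ct$-list-critical $K_t$-minor-free graph every vertex ball of bounded radius induces a subgraph of average degree $O(t)$, by combining a Kostochka--Yancey-type lower bound on the edge count of list-critical graphs with the Kostochka--Thomason edge-density bound for $K_t$-minor-free graphs. \emph{(ii) Find a good separation or a clique minor.} Combine local sparsity with $K_t$-minor-freeness to produce either a vertex of degree at most $ct$ (contradicting criticality) or a balanced separator $S$ with $|S| = o(t)$ whose removal splits $G$ into strictly smaller pieces; this is the ``sparse separation or dense $K_t$-minor'' dichotomy underlying the results of Norin, Postle and Song, of Postle, and of Delcourt and Postle. \emph{(iii) Cut and stitch.} Handle the separator case by coloring the two sides recursively under list assignments obtained from $L$ by deleting, on each side, at most $|S|$ ``reserved'' colors, then extend to $S$ itself using the at least $ct - |S|$ colors that remain available; since $|S| = o(t)$, enough colors survive. \emph{(iv) Reduce to small graphs.} Iterate this reduction to collapse the problem to $K_t$-minor-free graphs of size bounded in terms of $t$, and dispatch those by an ad hoc or exhaustive argument in the spirit of the Delcourt--Postle reduction of Linear Hadwiger's conjecture to coloring small graphs.

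The main obstacle is step~(i): the available density bounds for \emph{list}-critical graphs are genuinely weaker than Kostochka--Yancey's optimal bound for ordinary $k$-critical graphs, and this loss is exactly what forces the current best upper bound of $O(t(\log\log t)^2)$ (Delcourt--Postle) to fall short of the conjectured $O(t)$, as opposed to the $O(t\log\log t)$ bound known in the ordinary setting. Closing this gap would require either proving the conjectured optimal edge-density bound for list-critical graphs --- itself open --- so that the separator size in step~(ii) can be pushed to a constant multiple of $t$ with only constant loss, or devising a criticality-free argument (for instance a random greedy coloring along an ordering with bounded local weak colouring number) that bypasses the density barrier entirely. I would expect a complete proof of Conjecture~\ref{listhadwiger} to lie beyond current techniques --- the ordinary Linear Hadwiger's conjecture is still open and the list version is strictly harder --- so the realistic short-term target of this programme is to shave the $\operatorname{poly}\log\log t$ factor rather than to pin down the exact constant $c$.
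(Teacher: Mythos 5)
The statement you are asked about is not a theorem of this paper: it is the List Hadwiger Conjecture of Kawarabayashi and Mohar, which the paper records as an open problem. The paper proves nothing in this direction; its actual contribution goes the opposite way, constructing $K_t$-minor-free graphs with list chromatic number at least $(2-o(1))t$ and thereby showing that any constant $c$ in the conjecture must satisfy $c \ge 2$. So there is no ``paper proof'' to compare yours against, and a correct response to this prompt would have been to recognize that the statement is an unproved conjecture rather than to attempt a proof.

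As a proof, your proposal has a genuine and self-acknowledged gap: step~(i), the Kostochka--Yancey-type density bound for \emph{list}-critical graphs, is exactly the missing ingredient that keeps the best known upper bound at $O(t(\log\log t)^2)$ (Delcourt--Postle) rather than $O(t)$, and you concede it is open. Without it, steps (ii)--(iv) are a summary of the existing Norin--Postle--Song/Postle/Delcourt--Postle machinery, not a new argument. Step~(iii) is also too optimistic as stated: in the list setting you cannot simply ``reserve'' $|S|$ colors on each side, because the lists of different vertices need not share any colors, so deleting colors used on $S$ from the lists across a separator does not reduce to two independent instances with lists of size $ct-|S|$; handling separators is precisely where the published arguments require substantially more care (precoloring extension, small chromatic deficiency of the pieces, etc.). In short, what you have written is a reasonable survey of the current programme toward the conjecture, together with an honest admission that it does not close; it is not a proof, and given that even the ordinary Linear Hadwiger Conjecture is open, no proof should be expected here. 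It also misses the paper's actual point, which is the lower-bound construction (random bipartite complements glued along a common clique) refuting the $c=\tfrac{3}{2}$ strengthening.
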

At first, an even stronger conclusion, namely that every $K_t$-minor free graph is $t$-list-colorable, was believed to be possible, compare e.g.~\cite{kawarabayashi,wood}. However, this stronger conjecture was disproved by the following result of Bar\'{a}t, Joret and Wood~\cite{barat} from 2011, which shows that $c \ge \frac{4}{3}$ in Conjecture~\ref{listhadwiger} is necessary.
\begin{theorem}
For every integer $t \ge 1$ there exists a graph with no $K_{3t+2}$-minor and list chromatic number greater than $4t$. 
\end{theorem}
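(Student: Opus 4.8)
The plan is to produce, for every $t$, an explicit graph $G=G_t$ carrying a list assignment with all lists of size $4t$ that admits no proper coloring, together with a proof that $G$ has no $K_{3t+2}$-minor. I would organize this into three steps. First, fix the underlying graph. The most natural first attempt is a very dense graph with tightly constrained clique minors — a complete multipartite graph $K_{s\star n}$, or the complement of a disjoint union of cliques — with $s$ fixed and $n$ linear in $t$, because on such graphs $L$-colorability has a clean combinatorial reformulation (used in Step~3). However, one quickly checks that these plain constructions fall a constant factor short of the ratio $\frac43$ between list-chromatic number and Hadwiger number demanded by the statement, so one needs a more refined family — presumably one generalizing the small $K_5$-minor-free, non-$4$-list-colorable graphs (such as Voigt's planar example, which already handles $t=1$) — designed so that the structure forcing $\chi_\ell>4t$ is ``spread out'' enough to avoid a $K_{3t+2}$-minor.

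Second, bound the Hadwiger number of $G_t$ by $3t+1$. For multipartite-type graphs this is a short counting argument: in any model of a clique minor, two singleton branch sets must lie in distinct parts, and every non-singleton branch set uses at least two vertices, which together bound $\operatorname{had}(G)$ by half the sum of the number of vertices and the number of parts; the choice of parameters keeps this $\le 3t+1$. For a more elaborate $G_t$ the same philosophy applies: one shows that any clique minor is ``localized'' in the building blocks of the construction, and sums up the contributions.

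Third — and this is the heart of the matter — construct the bad list assignment. The template to imitate is the multipartite case: draw all $4t$ lists from a common small ground set $[N]$; then a proper $L$-coloring is exactly a choice of pairwise disjoint color sets $C_1,\dots,C_n$, one per part, with $C_i$ meeting every list inside part $i$. One designs the lists so that each $C_i$ is forced to be large — for instance by making the lists of a part have empty common intersection, forcing $|C_i|\ge 2$ — while keeping $N$ below $4t$, so that disjoint sets of those sizes cannot coexist. I expect this to be the main obstacle: making the forced sizes add up past $N$, i.e.\ pushing the ratio past the trivial value $1$ (attained already by cliques) all the way to $\frac43$, requires a genuinely clever global list design, likely via a combinatorial covering-type argument; the Hadwiger-number bound and the reduction to disjoint color sets are, by contrast, routine.
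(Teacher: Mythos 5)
This theorem is not proved in the paper; it is the earlier result of Bar\'{a}t, Joret and Wood~\cite{barat}, quoted for context, and the paper's own Theorem~\ref{main} is presented as an enhancement of their idea. Their argument, like the proof of Theorem~\ref{main} given here, rests on the pasting technique of Lemma~\ref{glue}, and this is precisely the idea your proposal lacks. You set out to find a single explicit graph carrying a single pre-committed list assignment, and your third step is organized around the multipartite reformulation of $L$-colorability as a choice of pairwise disjoint color sets, one per part. You correctly sense that complete multipartite graphs (equivalently, complements of disjoint unions of cliques) fall short of the ratio $\frac{4}{3}$, but you then gesture at an unspecified ``more refined family'' and a ``genuinely clever global list design'' without supplying either; the step you yourself flag as the main obstacle is simply not carried out, and no fixed list assignment on a single graph of the type you describe can carry it out.

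What works is structurally different. Take a base graph $H$ on $A \cup B$ with $|A|=|B|=2t+1$, both $A$ and $B$ cliques, and a perfect matching of \emph{non}-edges between them, pairing each $b\in B$ with a unique $m(b)\in A$. The count you sketch in your step two does the job here: singleton branch sets form a clique of $H$, hence there are at most $\omega(H)=2t+1$ of them, while every other branch set uses at least two of the $4t+2$ vertices, so $H$ has no $K_{3t+2}$-minor. Now, for each of the $(4t+1)^{|A|}$ colorings $c\colon A\to[4t+1]$, take a fresh copy $H(c)$ of $H$ and glue all of them along the common clique $A$; by Lemma~\ref{glue} the union $\mathbf{G}$ is still $K_{3t+2}$-minor free. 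Set $L(a)=[4t+1]$ for $a\in A$ and $L(b)=[4t+1]\setminus\{c(m(b))\}$ for $b\in B(c)$, so every list has size at least $4t$. If $\mathbf{G}$ had an $L$-coloring, its restriction to $A$ would be some $c$, and then $H(c)$ has $4t+2$ vertices colored from $[4t+1]$, forcing a monochromatic matched non-edge $\{m(b),b\}$, which $L(b)$ forbids. The crucial move is indexing exponentially many copies of one small base graph by the colorings of a shared clique, so the lists on $B(c)$ can be tailored \emph{per copy} in adversarial response to $c$; a single graph with one bespoke list assignment has no such freedom, which is why your approach as stated cannot close the gap.
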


Kawarabayashi and Mohar stated in~\cite{kawarabayashi} that they believe that Conjecture~\ref{listhadwiger} holds true for $c=\frac{3}{2}$, and this statement also appears as Conjecture~8.4 in the survey article~\cite{survey} by Seymour:
\begin{conjecture}\label{3halfcon}
Every graph $G$ without a $K_t$-minor satisfies $\chi_\ell(G) \le \frac{3}{2}t$.
\end{conjecture}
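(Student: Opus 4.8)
The natural strategy is induction on $|V(G)|$, reducing to dense graphs and then locating a reducible configuration forced by $K_t$-minor-freeness. Let $G$ be $K_t$-minor-free and let $L$ be a list assignment with $|L(v)|\ge\lceil\tfrac32 t\rceil$ for every $v\in V(G)$; the goal is to produce an $L$-coloring. First I would handle the low-degree case: if some vertex $v$ has $\deg_G(v)\le\lceil\tfrac32 t\rceil-1$, delete $v$, apply induction to the (still $K_t$-minor-free) graph $G-v$, and extend by choosing for $v$ a color of $L(v)$ avoided on $N(v)$. Since $\chi_\ell$ is monotone under taking subgraphs, I would also pass to an edge-maximal $K_t$-minor-free supergraph of $G$. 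Thus it remains to treat the case $\delta(G)\ge\lceil\tfrac32 t\rceil$; note that, unlike in the setting of Hadwiger's conjecture for the ordinary chromatic number, the Kostochka--Thomason density bound only forbids minimum degree of order $t\sqrt{\log t}$, so this case is genuinely nonempty and carries all the difficulty.

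The heart of the plan is the following structural statement, which I would try to establish: \emph{every $K_t$-minor-free graph $G$ with $\delta(G)\ge\lceil\tfrac32 t\rceil$ contains a nonempty set $W\subseteq V(G)$ such that $G[W]$ has degeneracy at most $\lceil\tfrac t2\rceil-1$ and every vertex of $W$ has at most $t-1$ neighbours in $V(G)\setminus W$.} Granting this, the induction closes cleanly: apply induction to $G-W$, then extend the coloring to $W$ by processing its vertices in a degeneracy ordering of $G[W]$; when $w\in W$ is reached it has at most $(\lceil\tfrac t2\rceil-1)+(t-1)=\lceil\tfrac32 t\rceil-2<|L(w)|$ already-colored neighbours, so a free color remains. (Any partition statement exhibiting a comparable trade-off between the internal degeneracy of a sparse part and its external degree would serve equally well, and the exact constants can be tuned.) To find such a $W$, the plan is to localise the Kostochka--Thomason extremal argument: rather than merely contracting a maximal sparse, well-connected region to expose one low-degree vertex, one would retain the whole region and argue that if \emph{no} set $W$ as above exists, then every small vertex set is either internally dense or contains a vertex of large external degree, and this can be bootstrapped into a greedy construction of $t$ pairwise disjoint connected branch sets, contradicting $K_t$-minor-freeness. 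This quantitatively sharp, localised version of the Kostochka--Thomason theorem is the technical core.

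The main obstacle is precisely this structural lemma, and I expect it to be insurmountable as stated. Already for dense (e.g. $G(n,\tfrac12)$-type) $K_t$-minor-free graphs on $n=\Theta(t\sqrt{\log t})$ vertices, no set $W$ of the required kind exists --- every linear-sized subset is internally dense and has all external degrees of order $t\sqrt{\log t}\gg t$ --- so one would be forced to look for subtler reducible configurations. More fundamentally, the lower-bound construction developed in this paper produces $K_t$-minor-free graphs with list chromatic number $(2-o(1))t$, which rules out \emph{any} reduction scheme proving the bound $\tfrac32 t$: Conjecture~\ref{3halfcon} is in fact false. The honest outcome of this line of analysis is therefore not a proof of Conjecture~\ref{3halfcon} but the two matching-up-to-constants facts that bracket the truth --- the recent upper bound $\chi_\ell(G)=O\big(t(\log\log t)^2\big)$ of Delcourt and Postle and the lower bound $\chi_\ell(G)\ge(2-o(1))t$ established here --- which together show that the optimal constant $c$ in Conjecture~\ref{listhadwiger}, if it exists, satisfies $c\ge 2$.
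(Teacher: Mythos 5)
You are right that this statement cannot be proved: Conjecture~\ref{3halfcon} is precisely what this paper refutes, so there is no ``paper's own proof'' to match, and your final conclusion --- that the honest outcome is falsity, with the truth bracketed between $(2-o(1))t$ and $O(t(\log\log t)^2)$ --- agrees with the paper. Your diagnosis of where the inductive scheme breaks is also sound: a reducible-configuration argument of the kind you sketch cannot exist, because minimum degree in $K_t$-minor-free graphs can be of order $t\sqrt{\log t}$ and, more decisively, because the bound itself is false.

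The gap, viewed as a resolution of the statement, is that you do not actually exhibit the refutation: showing that one induction strategy fails does not disprove the bound, and your appeal to ``the lower-bound construction developed in this paper'' is circular in the blind setting. The paper's refutation is an explicit probabilistic construction. One takes the complement $H$ of a sparse random bipartite graph $G(n,n,n^{-\delta})$, so that $V(H)=A\cup B$ with $A,B$ cliques of size $n$, every vertex has at most $\varepsilon n$ non-neighbours, and --- crucially, via a union bound showing that every large family of small disjoint sets on one side is ``hit'' completely by some vertex of any large set on the other side --- $H$ has no $K_t$-minor for $t\ge(1+2\varepsilon)n$. Then one glues one copy $H(c)$ of $H$ along the common clique $A$ for \emph{every} colouring $c:A\to[2n-1]$; by the clique-pasting lemma the union $\mathbf{G}$ remains $K_t$-minor-free. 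Giving each $a\in A$ the list $[2n-1]$ and each $b\in B(c)$ the list $[2n-1]$ minus the colours $c(a)$ of its non-neighbours $a\in A$ yields lists of size at least $(2-\varepsilon')n$ admitting no proper colouring, since whatever colouring is used on $A$ equals some $c$, and then the $2n$ vertices of $H(c)$ cannot be properly coloured from $2n-1$ colours without a conflict that the lists forbid. This gives $\chi_\ell(\mathbf{G})\ge(2-\varepsilon)t$, so the constant $\tfrac32$ in the statement must be replaced by at least $2$. If you want your write-up to stand on its own, this construction (or some equivalent explicit family) is the missing content.
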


In this note, we disprove Conjecture~\ref{3halfcon} by showing that the maximum list chromatic number of $K_t$-minor free graphs is at least $2t-o(t)$, and hence $c \ge 2$ in Conjecture~\ref{listhadwiger} is necessary. The proof enhances an idea from~\cite{barat} by using probabilistic arguments.

\begin{theorem}\label{main}
For every $\varepsilon \in (0,1)$ there is $t_0=t_0(\varepsilon)$ such that for every $t \ge t_0$ there exists a graph with no $K_t$-minor and list chromatic number at least $(2-\varepsilon)t$. 
\end{theorem}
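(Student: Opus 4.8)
The plan is to construct, for every sufficiently large $t$, a graph $G$ with no $K_t$-minor together with a list assignment $L$ giving every vertex a list of size $k:=\lceil (2-\varepsilon)t\rceil-1$ and admitting no proper $L$-coloring; since this witnesses $\chi_\ell(G)\ge k+1\ge (2-\varepsilon)t$, the theorem follows. The construction is randomized: it refines the deterministic gadget construction of Bar\'at, Joret and Wood by putting randomness both into (part of) the graph and into the list assignment, and the analysis shows that with positive probability the resulting pair $(G,L)$ has both required properties at once. In broad strokes, $G$ contains a \emph{core} --- a bounded number of vertex-disjoint cliques, each of size slightly below $t$, carrying a fixed, carefully designed list assignment that forces any proper coloring to use, across the core, a prescribed family of roughly $2t$ colors --- together with a large \emph{outer part}, whose vertices receive random $k$-element lists and are joined to the core by random edges, engineered so that whichever family of $\approx 2t$ colors the core uses, some outer vertex is left without an admissible color and the coloring cannot be completed.

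The first step is to bound the Hadwiger number $\eta(G)$. Since the core cliques are pairwise non-adjacent and the outer part is sparse, an essentially deterministic argument shows that in any clique minor of $G$ every branch set but one must meet the core, so the number of branch sets is at most the total size of the core plus one; a routine concentration estimate additionally rules out that, with high probability over the random edges, a small collection of outer vertices acts as an efficient connector between the core cliques. Splitting the core into at least three cliques keeps every single clique below $t$ vertices, so the clique number, and hence $\eta(G)$, stays below $t$.

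The crux is the second step: showing that with positive probability no proper $L$-coloring exists. One observes that, up to relabeling, a proper coloring of the core is specified by a bounded amount of combinatorial data (essentially, which sub-block of colors each core clique uses), so there are only finitely many \emph{core colorings}, their number being some explicit --- large, but controlled --- function of $t$. For a fixed core coloring, a single outer vertex obstructs it, i.e.\ its random $k$-list is entirely contained in the set of colors it is forbidden to reuse, with probability at least an absolute constant $p_0>0$; this is exactly where it matters that the palette is only slightly larger than $2t$ while the forbidden set has size close to $2t$. As the outer vertices are probabilistically independent, a Chernoff bound makes the probability that \emph{every} outer vertex fails to obstruct a given core coloring superexponentially small, and a union bound over all core colorings then yields, with positive probability, a list assignment under which every core coloring is obstructed, so $G$ is not $L$-colorable. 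I expect the main obstacle to be the quantitative balancing of the two steps: bounding $p_0$ away from $0$ wants the palette, and the core's colour demand, as close as possible to $2t$, whereas freedom from a $K_t$-minor forces each core clique below $t$ and hence spreads the $\approx 2t$ colours over several cliques, so each outer vertex only "sees" part of the core --- and optimizing this trade-off is precisely what produces the $o(t)$ error term rather than a clean $2t$. A secondary technical point is calibrating the size of the outer part so that the number of core colorings remains negligible against the $1-o(1)$-type probabilities coming from the Chernoff estimates, which is what pins down how large the outer part must be.
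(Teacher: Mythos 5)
Your high-level plan---a probabilistic refinement of the Bar\'at--Joret--Wood gadget---is in the right spirit, but the construction you sketch differs from the paper's in ways that leave real gaps, chiefly in the minor-freeness step. In the paper the randomness lives only in the graph, not in the lists. The key gadget $H$ is built by taking the complement of a sparse bipartite random graph $G(n,n,n^{-\delta})$ on parts $A,B$ of size $n$: both $A$ and $B$ are cliques, $A$--$B$ is \emph{almost} complete, every vertex has at most $\varepsilon n$ non-neighbours, and yet---this is the crucial probabilistic lemma---$H$ has no $K_t$-minor for any $t\ge(1+2\varepsilon)n$, i.e.\ the Hadwiger number is barely above $n$ even though $H$ has $2n$ vertices and is nearly complete. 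The reason is that to build a $K_t$-minor on $2n$ vertices with $t$ close to $2n$, most branch sets must be singletons on one side and small sets on the other, and the random bipartite non-edges then w.h.p.\ produce a non-adjacent (singleton, small set) pair. Your sketch has no analogue of this: ``a bounded number of vertex-disjoint cliques, each of size slightly below $t$'' with a sparse outer part bounds branch sets by ``the total size of the core plus one,'' but with a core of total size $\approx 2t$ that gives a Hadwiger bound of $\approx 2t$, not $t$. And splitting the core into $\ge 3$ pairwise non-adjacent cliques controls only the clique number, which is far from controlling the Hadwiger number when the outer part connects them. So the minor-freeness step is not merely unpolished but likely does not go through as stated.

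The non-colorability step is also structurally different. You randomize the lists of outer vertices and then argue via Chernoff plus a union bound over ``core colorings.'' The paper avoids this entirely: it glues one copy $H(c)$ of the gadget onto the common clique $A$ for \emph{every} possible coloring $c:A\to[2n-1]$, and assigns the deterministic list $L(b)=[2n-1]\setminus\{c(a):a\in A,\ ab\notin E(H(c))\}$ to $b\in B(c)$. Then non-$L$-colorability is a clean pigeonhole argument: any $L$-coloring restricted to $A$ equals some $c$, the copy $H(c)$ has $2n$ vertices but only $2n-1$ colors, so a color repeats on a non-adjacent pair $a\in A$, $b\in B(c)$, contradicting $c(a)\notin L(b)$. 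This sidesteps the delicate quantitative balancing you (rightly) flag as the main obstacle in your approach. If you want to salvage your route, you would need (i) a replacement for the paper's Lemma on the complement of $G(n,n,n^{-\delta})$ having small Hadwiger number, and (ii) a correct accounting for the number of ``core colorings'' in your union bound, which, with cliques of size $\Theta(t)$, is exponential in $t\log t$ unless you can quotient by more symmetry than you describe.
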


It would be interesting to see whether our lower-bound construction could be optimal up to the lower-order term, or whether further improvement of the lower bound is possible. 
\begin{problem}
Does every $K_t$-minor-free graph $G$ satisfy $\chi_\ell(G) \le 2t$?
\end{problem}

\section{Proof of Theorem~\ref{main}}

In the following, for a natural number $n \in \mathbb{N}$ and a probability $p \in [0,1]$, we denote by $G(n,n,p)$ the bipartite Erd\H{o}s-Renyi graph, that is, a random bipartite graph $G$ with bipartition $A \cup B$ such that $|A|=|B|=n$, in which every pair $ab$ with $a \in A, b \in B$ is selected as an edge of $G$ with probability $p$, independently from all other such pairs. 

\begin{lemma}\label{random}
Let $\varepsilon \in (0,1)$ be fixed, let $f=f(\varepsilon) \in \mathbb{N}$ and $\delta=\delta(\varepsilon) \in (0,1)$ be constants chosen such that $f \delta<1$. Let $p=p(n):=n^{-\delta}$. Then w.h.p. as $n \rightarrow \infty$, the random graph $G=G(n,n,p(n))$ with bipartition $A \cup B$ satisfies both of the following properties:
\begin{itemize}
\item For every subset $X \subseteq A$ such that $|X| \ge \varepsilon n$ and every collection of pairwise disjoint non-empty subsets $Y_1,\ldots,Y_k \subseteq B$ such that $k \ge \varepsilon n$ and $\max\{|Y_1|,\ldots,|Y_k|\} \le f$, there exists a vertex $x \in X$ and some $j \in [k]$ such that $G$ contains all the edges $xy, y \in Y_j$. The same statement holds symmetrically for the case when $X \subseteq B$ and $Y_1,\ldots,Y_k \subseteq A$.
\item $G$ has maximum degree at most $\varepsilon n$. 
\end{itemize}
\end{lemma}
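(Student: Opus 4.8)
The plan is to verify the two bulleted properties separately; the second is routine, and the first is the heart of the matter. For the maximum degree, note that every vertex of $G=G(n,n,p)$ has a binomially distributed degree with mean $np=n^{1-\delta}$, which is $o(n)$ since $\delta>0$. A Chernoff bound makes the probability that a fixed vertex has degree exceeding $2n^{1-\delta}$ exponentially small in $n^{1-\delta}$, and a union bound over the $2n$ vertices shows that w.h.p.\ every degree is at most $2n^{1-\delta}<\varepsilon n$ once $n$ is large. This settles the second bullet, so from now on I focus on the first.

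Write $m:=\lceil\varepsilon n\rceil$. I would first reduce to \emph{bad configurations} of the minimal size: if some $X\subseteq A$ with $|X|\ge\varepsilon n$ together with pairwise disjoint nonempty sets $Y_1,\dots,Y_k\subseteq B$ (with $k\ge\varepsilon n$ and $|Y_j|\le f$) witnesses a failure of the first property, then so does any $m$-element subset of $X$ together with any $m$-element subcollection of the $Y_j$, since deleting vertices from $X$ or sets from the collection cannot create a completely-joined pair. So it suffices to bound, for a fixed $X\subseteq A$ with $|X|=m$ and fixed pairwise disjoint nonempty $Y_1,\dots,Y_m\subseteq B$ with $|Y_j|\le f$, the probability of the event $\mathcal{B}$ that no $x\in X$ is completely joined to any $Y_j$. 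The independence built into $G(n,n,p)$ does the work here: as $(x,j)$ ranges over $X\times[m]$ the events ``$x$ is completely joined to $Y_j$'' depend on pairwise disjoint sets of potential edges (the $Y_j$ are disjoint and the $x$ are distinct), hence are mutually independent, and each has probability $p^{|Y_j|}\ge p^f$ since $p<1$. Therefore
$$\Pr[\mathcal{B}]=\prod_{x\in X}\prod_{j=1}^{m}\bigl(1-p^{|Y_j|}\bigr)\le(1-p^f)^{m^2}\le\exp\!\bigl(-m^2p^f\bigr).$$

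It remains to take a union bound. There are at most $\binom{n}{m}\le 2^n$ choices for $X$; the number of ordered tuples $(Y_1,\dots,Y_m)$ of disjoint subsets of $B$ is at most $(m+1)^n\le(n+1)^n$ (label each vertex of $B$ by an element of $\{0,1,\dots,m\}$); and a factor $2$ covers the symmetric case $X\subseteq B$, $Y_j\subseteq A$. So there are at most $e^{O(n\log n)}$ configurations. On the other hand $m^2p^f\ge\varepsilon^2n^2\cdot n^{-\delta f}=\varepsilon^2 n^{2-\delta f}$, and this is exactly where the hypothesis $f\delta<1$ enters: it forces $2-\delta f>1$, so that $m^2p^f$ grows like a fixed power of $n$ strictly larger than $n$, in particular $m^2p^f=\omega(n\log n)$. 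Hence
$$\Pr[\text{some bad configuration exists}]\le e^{O(n\log n)}\cdot\exp\!\bigl(-\varepsilon^2 n^{2-\delta f}\bigr)\longrightarrow 0,$$
and intersecting this with the (w.h.p.) max-degree event completes the proof.

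I do not anticipate a genuine obstacle; the only things to get right are the bookkeeping of the reduction to configurations of exact size $m$, the (easy but essential) independence justification, and the verification that the per-configuration bound $\exp(-\Theta(n^{2-\delta f}))$ — which is superexponential in $n$ — really does dominate the merely $e^{O(n\log n)}$ count of configurations. That last point is precisely what the condition $f\delta<1$ is there to guarantee.
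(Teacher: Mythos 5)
Your proof is correct and follows essentially the same approach as the paper: for the first bullet you establish independence of the completely-joined events across $(x,j)$, obtain the per-configuration bound $\exp(-\Theta(n^{2-f\delta}))$, count configurations by $e^{O(n\log n)}$, and observe that $f\delta<1$ makes the union bound vanish; for the second bullet you apply Chernoff plus a union bound over the $2n$ vertices. Your explicit reduction to configurations of exact size $m=\lceil\varepsilon n\rceil$ is a small cosmetic simplification that the paper skips by bounding $|X|\cdot k\ge\varepsilon^2 n^2$ directly, but the substance is the same.
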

\begin{proof}
\noindent
\begin{itemize}
\item Let $E_n$ denote the probability event that $G$ does not satisfy the property claimed in the first item. We need to show that $\mathbb{P}(E_n) \rightarrow 0$ as $n \rightarrow \infty$. So consider a fixed subset $X \subseteq A$ (or symmetrically, $X \subseteq B$) such that $|X| \ge \varepsilon n$, and a fixed collection $Y_1,\ldots,Y_k$ of disjoint non-empty subsets of $B$ (or symmetrically, $A$), where $k \ge \varepsilon n$ and $\max\{|Y_1|,\ldots,|Y_k|\} \le f$. Let $E(X,Y_1,\ldots,Y_k)$ be the probability event ``there exists no pair $(x,j) \in X \times [k]$ such that $x$ is fully connected to the vertices in $Y_j$''. Fixing a vertex $x \in X$ and an index $j \in [k]$, clearly the probability of the event that ``$x$ is not fully connected to $Y_j$'' equals $1-p^{|Y_j|} \le 1-p^{f}$.  Since these events are independent for different choices of $(x,j)$, we conclude that 
$$\mathbb{P}(E(X,Y_1,\ldots,Y_k)) \le (1-p^{f})^{|X|\cdot k} \le (1-p^{f})^{\varepsilon^2n^2}\le \exp(-p^{f}\varepsilon^2 n^2)=\exp(-\varepsilon^2n^{2-f\delta}).$$
With a rough estimate, there are at most $$2 \cdot 2^n\cdot (n+1)^{n} \le \exp(\ln(2)(n+1)+n \ln (n+1))$$ ways to select the sets $X,Y_1,\ldots,Y_k$. Hence, applying a union bound we find that 
$$\mathbb{P}(E_n) \le \exp(\ln(2)(n+1)+n \ln (n+1)-\varepsilon^2n^{2-f\delta}).$$
The right hand side of the above inequality tends to $0$ as $n \rightarrow \infty$, since $f\delta<1$ and hence $\varepsilon^2n^{2-f\delta}=\Omega(n^{2-f\delta})$ grows faster than $\ln(2)(n+1)+n \ln (n+1)=O(n\ln n)$. This proves that $G$ satisfies the properties claimed by the first item w.h.p., as required.
\item To show that also the property claimed by the second item holds true w.h.p., consider the probability that a fixed vertex $x \in A \cup B$ has more than $\varepsilon n$ neighbors in $G$. Note that the degree of $x$ in $G(n,n,p)$ is distributed like a binomial random variable $B(n,p)$, and hence its expectancy is $np=n^{1-\delta}$, which is smaller than $\frac{\varepsilon n}{2}$ for $n$ sufficiently large in terms of $\varepsilon$ and $\delta$. Applying Chernoff's bound we find for every sufficiently large $n$:
$$\mathbb{P}(d_G(x)>\varepsilon n )\le \mathbb{P}(B(n,p)>2np) \le \exp\left(-\frac{1}{3}np\right)=\exp\left(-\frac{1}{3}n^{1-\delta}\right).$$
Since this bound applies to every choice of $x \in A \cup B$, applying a union bound we find that the probability that $G$ has maximum degree more than $\varepsilon n$ is at most
$$2n\exp\left(-\frac{1}{3}n^{1-\delta}\right)=\exp\left(\ln(2n)-\frac{1}{3}n^{1-\delta}\right)$$ which tends to $0$ as $n \rightarrow \infty$, as desired (here we used that $\delta<1$ and hence $n^{1-\delta}$ grows faster than $\ln(2n)$). 
\end{itemize}

\end{proof}

The next lemma uses Lemma~\ref{random} to obtain a useful deterministic statement about the existence of graphs with certain properties, which are then handy when constructing the lower-bound examples for Theorem~\ref{main}.

\begin{lemma}\label{cor}
For every $\varepsilon \in (0,1)$, there is $n_0=n_0(\varepsilon)$ such that for every $n \ge n_0$, there exists a graph $H$ whose vertex-set $V(H)=A \cup B$ is partitioned into two disjoint sets $A$ and  $B$ of size $n$, and such that the following properties hold:
\begin{itemize}
\item Both $A$ and $B$ are cliques of $H$,
\item every vertex in $H$ has at most $\varepsilon n$ non-neighbors in $H$, and
\item for every $t \in \mathbb{N}$ such that $t \ge (1+2\varepsilon) n$, $H$ does not contain $K_t$ as a minor. 
\end{itemize}
\end{lemma}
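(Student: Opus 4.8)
The plan is to build $H$ from the random bipartite graph $G=G(n,n,p)$ supplied by Lemma~\ref{random} via a complementation trick. Given $\varepsilon$, I would pick an integer $f=f(\varepsilon)$ large enough that $1/(f+1)<\varepsilon/4$ (e.g.\ $f:=\lceil 4/\varepsilon\rceil$) and set $\delta:=1/(f+1)$, so that $f\delta<1$; Lemma~\ref{random} then provides, for all $n$ past some threshold $n_0(\varepsilon)$, a bipartite graph $G$ on parts $A,B$ of size $n$ satisfying its two listed properties. I then take $H$ to be the graph on $V(H)=A\cup B$ in which $A$ and $B$ are complete and in which the edges between $A$ and $B$ are exactly the \emph{non}-edges of $G$; in other words $H=K_{A\cup B}-E(G)$.

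Two of the three required properties come essentially for free. That $A$ and $B$ are cliques of $H$ is immediate from the construction. For the second property, the non-neighbours of a vertex $v\in A$ in $H$ are precisely its $G$-neighbours, all of which lie in $B$, and by the maximum-degree part of Lemma~\ref{random} there are at most $\varepsilon n$ of them; the case $v\in B$ is symmetric.

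The heart of the matter is the third property, which I would establish by contradiction. Suppose $H$ has a $K_t$-minor with branch sets $Z_1,\dots,Z_t$ and $t\ge(1+2\varepsilon)n$. Partition the branch sets into those contained in $A$, those contained in $B$, and those meeting both sides, with respective counts $a$, $b$, $c$. Since the branch sets contained in $A$ together with the $A$-parts of the straddling branch sets are $a+c$ pairwise disjoint non-empty subsets of $A$, we have $a+c\le n$, and symmetrically $b+c\le n$; combined with $a+b+c=t\ge(1+2\varepsilon)n$ this forces $a\ge 2\varepsilon n$ and $b\ge 2\varepsilon n$. Counting all $2n$ vertices of $H$ against the branch sets, at least $2t-2n\ge 4\varepsilon n$ of them must be singletons, hence at least $2\varepsilon n$ singleton branch sets lie on one fixed side, say in $A$; let $X\subseteq A$ be the set of the corresponding vertices. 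On the other side, among the $b\ge 2\varepsilon n$ branch sets contained in $B$ at most $n/(f+1)<\varepsilon n/4$ can have more than $f$ vertices (they are disjoint subsets of $B$), so more than $\varepsilon n$ of them have at most $f$ vertices; call these $Y_1,\dots,Y_k$, so $k>\varepsilon n$.

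Now $X$ and $Y_1,\dots,Y_k$ form exactly the configuration that the first property of Lemma~\ref{random} rules out: in a $K_t$-minor every two branch sets are joined by an edge of $H$, so for each $x\in X$ and each $j\in[k]$ the singleton $\{x\}$ has a neighbour in $Y_j$ in $H$, i.e.\ $x$ is \emph{not} completely joined to $Y_j$ in $G$; yet $|X|\ge\varepsilon n$, $k\ge\varepsilon n$, the $Y_j$ are pairwise disjoint non-empty subsets of $B$ of size at most $f$, and $X\subseteq A$, so the first property guarantees some $x\in X$ and some $j$ with $x$ completely joined to $Y_j$ in $G$ — a contradiction. (If the $2\varepsilon n$ singletons had been found in $B$ rather than $A$ one invokes the symmetric form of the property instead.) I expect the one genuinely delicate step to be the combinatorial extraction in the previous paragraph — simultaneously securing linearly many singleton branch sets on one side and linearly many bounded-size branch sets on the other — since, once that is in place, both the easy properties and the final contradiction are routine applications of Lemma~\ref{random}.
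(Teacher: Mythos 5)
Your proposal is correct and follows essentially the same route as the paper: take $H$ to be the complement of the random bipartite graph from Lemma~\ref{random}, read off the clique and degree properties directly, and for the minor-freeness argue that a $K_t$-minor with $t\ge(1+2\varepsilon)n$ would force at least $4\varepsilon n$ singleton branch sets (hence $2\varepsilon n$ on one side) together with at least $\varepsilon n$ branch sets of size at most $f$ contained in the opposite side, contradicting the first property of Lemma~\ref{random}. The only differences are cosmetic: the paper chooses $f=\lceil 1/\varepsilon\rceil$ and $\delta=\varepsilon/2$ while you use the slightly more generous $f=\lceil 4/\varepsilon\rceil$, $\delta=1/(f+1)$, and the paper organizes the counting via $\mathcal{Z}_A$ (sets contained in $A$) and $\mathcal{Z}_B$ (sets meeting $B$) rather than your explicit $(a,b,c)$ split; both yield the same bound $\ge 2\varepsilon n$ on the number of branch sets fully contained in the side opposite the singletons.
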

\begin{proof}
Let $f:=\lceil \frac{1}{\varepsilon} \rceil \in \mathbb{N}$ and $\delta:=\frac{\varepsilon}{2}$. Then $f\delta <1$, and hence we may apply Lemma~\ref{random}. It follows directly that there exists $n_0=n_0(\varepsilon) \in \mathbb{N}$ such that for every $n \ge n_0$ there exists a bipartite graph $G$, whose bipartition classes $A$ and $B$ are both of size $n$, and such that 
\begin{itemize}
\item For every subset $X \subseteq A$ such that $|X| \ge \varepsilon n$ and every collection of pairwise disjoint non-empty subsets $Y_1,\ldots,Y_k \subseteq B$ such that $k \ge \varepsilon n$ and $\max\{|Y_1|,\ldots,|Y_k|\} \le f$, there exists a vertex $x \in X$ and some $j \in [k]$ such that $G$ contains all the edges $xy, y \in Y_j$. The same statement holds symmetrically for the case when $X \subseteq B$ and $Y_1,\ldots,Y_k \subseteq A$.
\item $G$ has maximum degree at most $\varepsilon n$. 
\end{itemize}
We now define $H$ as the complement of $G$ (also with vertex-set $A \cup B$). Since $G$ is bipartite, clearly $A$ and $B$ form cliques in $H$, verifying the first item. The second item follows directly from the fact that $\Delta(G) \le \varepsilon n$.

It hence remains to verify the last item. Towards a contradiction, suppose that there exists a number $t \in \mathbb{N}$, $t \ge (1+2\varepsilon)n$, such that $H$ contains $K_t$ as a minor. This implies that there exists a collection $\mathcal{Z}$ of non-empty and pairwise disjoint subsets of $V(H)$ such that $|\mathcal{Z}|=t$ and such that for every two distinct $Z, Z' \in \mathcal{Z}$, there exists at least one edge in $H$ connecting a vertex in $Z$ to a vertex in $Z'$. Let us denote $\mathcal{Z}_s:=\{Z \in \mathcal{Z}||Z|=s\}$, and $z_s:=|\mathcal{Z}_s|$, for every $s \ge 1$. We clearly have
$$2n \ge \sum_{s\ge 1}{sz_s} \ge 2(t-z_1)+z_1=2t-z_1 \ge 2n+4\varepsilon n -z_1.$$
Rearranging yields that $z_1 \ge 4\varepsilon n$. From this we may conclude that either $A$ or $B$ contains at least $2 \varepsilon n$ singletons from $\mathcal{Z}$. By symmetry (possibly by renaming $A$ and $B$), we may assume w.l.o.g. that $B$ contains at least $2 \varepsilon n$ singletons from $\mathcal{Z}$, and denote the set of these singletons by $X$. Let us now define $\mathcal{Z}_A:=\{Z \in \mathcal{Z}|Z \subseteq A\}$ and $\mathcal{Z}_B:=\{Z \in \mathcal{Z}|Z \cap B \neq \emptyset\}$. Since the sets in $\mathcal{Z}$ are pairwise disjoint, we can see that $|\mathcal{Z}_B| \le |B|=n$, and therefore $|\mathcal{Z}_A|=t-|\mathcal{Z}_B| \ge t-n \ge 2\varepsilon n$. Since $|A|=n$, the latter implies that there are at least $\varepsilon n$ distinct sets in $\mathcal{Z}_A$ which have size at most $\frac{1}{\varepsilon}$. Let $Y_1,\ldots,Y_k$ with $k \ge \varepsilon n$ be an enumeration of the sets in $\mathcal{Z}_A$ of size at most $\frac{1}{\varepsilon} \le f$. By the above, there exists $j \in [k]$ and a vertex $x \in X$ such that $xy \in E(G)$ for every $y \in Y_j$. Since $H$ is the complement of $G$, this means that $\{x\}$ and $Y_j$ are distinct sets in $\mathcal{Z}$, which do not have any connecting edge. This is a contradiction to our initial assumptions, and hence we have shown that the third item claimed in the lemma is also satisfied. This concludes the proof. 
\end{proof}

We are now ready for the proof of Theorem~\ref{main}. The only missing ingredient is the following well-known ``pasting-lemma'', compare Lemma~3 in~\cite{barat}. 

\begin{lemma}\label{glue}
Let $G_1$ and $G_2$ be $K_t$-minor free graphs, and let $V(G_1) \cap V(G_2)=C$. If $C$ forms a clique in both $G_1$ and $G_2$, then the graph $G_1 \cup G_2$ is also $K_t$-minor free.  
\end{lemma}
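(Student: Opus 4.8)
The plan is to argue by contradiction: assuming $G := G_1 \cup G_2$ contains a $K_t$-minor with branch sets $V_1,\dots,V_t$ (nonempty, pairwise disjoint, each inducing a connected subgraph, pairwise joined by an edge), I will exhibit a $K_t$-minor inside $G_1$ or inside $G_2$. Write $A_1 := V(G_1)\setminus C$ and $A_2 := V(G_2)\setminus C$, so that $V(G)$ is the disjoint union of $A_1$, $C$ and $A_2$. I first record two trivial observations. (i) Every neighbour in $G$ of a vertex of $A_i$ lies in $A_i\cup C$, since all edges at such a vertex come from $G_i$; in particular $G$ has no edge between $A_1$ and $A_2$, and any connected subgraph of $G$ meeting both $V(G_1)$ and $V(G_2)$ must meet $C$. (ii) Every edge of $G$ with both ends in $V(G_1)$ lies in $E(G_1)$: if it came only from $G_2$, its ends would lie in $V(G_1)\cap V(G_2)=C$, where $G_1$ is complete. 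Hence $G_1[S]=G[S]$ for every $S\subseteq V(G_1)$ (and symmetrically for $G_2$).

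The one genuine case distinction comes next. By (i) a branch set contained in $A_1$ cannot be joined by an edge to a branch set contained in $A_2$; since all branch sets are pairwise joined, it is impossible to have both some $V_x\subseteq A_1$ and some $V_y\subseteq A_2$. After possibly swapping $G_1$ and $G_2$, I may therefore assume that every branch set meets $V(G_1)$. Put $W_x := V_x\cap V(G_1)$; these $t$ sets are nonempty and pairwise disjoint, and it remains to check that $(W_1,\dots,W_t)$ is a $K_t$-model in $G_1$.

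For connectedness of $G_1[W_x]$: given $u,u'\in W_x$, take a path from $u$ to $u'$ in the connected graph $G[V_x]$; by (i) every maximal segment of it that runs inside $V_x\cap A_2$ is entered and left through $V_x\cap C$, so replacing each such segment by a single edge of the clique $C$ --- an edge of $G_1$ --- converts the path into a walk from $u$ to $u'$ inside $G_1[W_x]$. For adjacency of $W_x$ and $W_y$ ($x\ne y$): pick an edge $e$ of $G$ between $V_x$ and $V_y$. If $e\in E(G_1)$ it already joins $W_x$ to $W_y$. Otherwise $e\in E(G_2)$, so both $V_x$ and $V_y$ meet $V(G_2)$; as each also meets $V(G_1)$, observation (i) forces $V_x\cap C\ne\emptyset$ and $V_y\cap C\ne\emptyset$, and since $C$ is a clique in $G_1$ any vertex of $V_x\cap C\subseteq W_x$ is adjacent in $G_1$ to any vertex of $V_y\cap C\subseteq W_y$. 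This produces a $K_t$-minor in $G_1$, the desired contradiction. (Had the surviving branch sets instead lain in $A_2$, the symmetric argument produces a $K_t$-minor in $G_2$.)

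I expect the only real subtlety to be the connectedness claim for $G_1[W_x]$: one must see that discarding the portion of a branch set lying in $A_2$ cannot disconnect the rest, which holds precisely because each surviving piece hangs off the common clique $C$ and can be reattached to the others through $C$ inside $G_1$. Everything else is bookkeeping about which of $E(G_1)$, $E(G_2)$ a given edge belongs to, together with the initial case split that decides whether the minor ends up in $G_1$ or in $G_2$.
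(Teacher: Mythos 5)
The paper does not actually prove Lemma~\ref{glue}; it is stated as a ``well-known pasting-lemma'' and attributed to Lemma~3 of Bar\'at, Joret and Wood~\cite{barat}, so there is no in-paper proof to compare against. Your argument is a correct and complete proof of this standard clique-sum fact: the two observations (no $A_1$--$A_2$ edges, and $G[S]=G_i[S]$ for $S\subseteq V(G_i)$) are right, the case split ensuring every branch set meets $V(G_1)$ (or symmetrically $V(G_2)$) is exactly the needed dichotomy, and both the reconnection of $G_1[W_x]$ through the clique $C$ and the edge-transfer argument for pairwise adjacency of the $W_x$ are handled carefully and correctly. This matches the classical proof one finds in the literature for clique-sums preserving $K_t$-minor-freeness.
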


\begin{proof}[Proof of Theorem~\ref{main}]
Let a fixed $\varepsilon \in (0,1)$ be given. Pick some $\varepsilon' \in (0,1)$ such that $\frac{2-\varepsilon'}{1+2\varepsilon'} \ge 2-\frac{\varepsilon}{2}$.  Let $n_0=n_0(\varepsilon') \in \mathbb{N}$ be as in Lemma~\ref{cor}, and define $t_0:=\max\{\lceil(1+2\varepsilon')n_0\rceil,\left\lfloor \frac{4}{\varepsilon}\right\rfloor\}$. Now, let $t \ge t_0$ be any given integer. Define $n:=\left\lfloor\frac{t}{1+2\varepsilon'}\right\rfloor \ge n_0$. Applying Lemma~\ref{cor}, we find that there exists a graph $H$ whose vertex-set is partitioned into two non-empty sets $A$ and $B$ of size $n$, such that both $A$ and $B$ form cliques in $H$, every vertex in $H$ has at most $\varepsilon' n$ non-neighbors, and $H$ is $K_t$-minor free (since $t \ge (1+2\varepsilon')n$, by definition of $n$). 

For every possible assignment $c \in [2n-1]^A$ of colors from $[2n-1]$ to vertices in $A$, denote by $H(c)$ an isomorphic copy of $H$, such that the vertex-set of $H(c)$ decomposes into the sets $A$ and $B(c)$ of size $n$. More precisely, the distinct copies $H(c), c \in [2n-1]^A$ of $H$ share the same set $A$ but have pairwise disjoint sets $B(c)$. Since $A$ forms a clique of size $n$ in the $K_t$-minor-free graph $H(c)$ for every coloring $c:A \rightarrow [2n-1]$, it follows by repeated application of Lemma~\ref{glue} that the graph $\mathbf{G}$ with vertex set $A \cup \bigcup_{c \in [2n-1]^A}{B(c)}$, obtained as the union of the graphs $H(c), c \in [2n-1]^A$, is $K_t$-minor free. 

Now, consider an assignment $L:V(\mathbf{G}) \rightarrow 2^\mathbb{N}$ of color lists to the vertices of $\mathbf{G}$ as follows:
For every vertex $a \in A$, we define $L(a):=[2n-1]$, and for every vertex $b \in B(c)$ for some coloring $c \in [2n-1]^A$ of $A$, we define $L(b):= [2n-1] \setminus \{c(a)|a \in A, ab \notin E(H(c))\}$.
Note that since every vertex in $B(c)$ has at most $\varepsilon' n$ non-neighbors in $H(c)$, we have $|L(v)| \ge 2n-1-\varepsilon' n$ for every vertex $v \in V(\mathbf{G})$. 

We now claim that $\mathbf{G}$ does not admit a proper coloring with colors chosen from the lists $L(v), v \in V(\mathbf{G})$, which will then prove that $\chi_\ell(\mathbf{G}) \ge 2n-\varepsilon' n$. Indeed, suppose towards a contradiction there exists a proper coloring $c_\mathbf{G}:V(\mathbf{G}) \rightarrow \mathbb{N}$ of $\mathbf{G}$ such that $c_\mathbf{G}(v) \in L(v)$ for every $v \in V(\mathbf{G})$. Let $c$ be the restriction of $c_\mathbf{G}$ to $A$, and consider the proper coloring of $H(c)$ obtained by restricting $c_\mathbf{G}$. Since $v(H(c))=2n$ and $c_\mathbf{G}(v) \in [2n-1]$ for every $v \in V(H(c))$, there must exist two (necessarily non-adjacent) vertices in $H(c)$ which have the same color with respect to $c_\mathbf{G}$. Concretely, there exist $a \in A$, $b\in B(c)$ such that $ab \notin E(H(c))$ and $c_\mathbf{G}(a)=c_\mathbf{G}(b)$. This however yields a contradiction, since $c_\mathbf{G}(b) \in L(b)$ and by definition $c(a)=c_\mathbf{G}(a)$ is not included in the list of $b$. 

We conclude that indeed, $\mathbf{G}$ is a $K_t$-minor free graph which satisfies $$\chi_\ell(\mathbf{G}) \ge (2-\varepsilon')n=(2-\varepsilon')\left\lfloor\frac{t}{1+2\varepsilon'}\right\rfloor >(2-\varepsilon')\left(\frac{t}{1+2\varepsilon'}-1\right) \ge \left(2-\frac{\varepsilon}{2}\right)t-(2-\varepsilon') \ge (2-\varepsilon)t,$$ where for the last inequality we used $t \ge t_0 \ge \frac{4}{\varepsilon}$. 
\end{proof}

\end{document}